\documentclass[a4paper,10pt,leqno]{amsart}
\usepackage{amsmath,amsfonts,amsthm,amssymb,dsfont}
\usepackage[alphabetic]{amsrefs}
\usepackage[OT4]{fontenc}
\usepackage{enumerate}
\usepackage{mathrsfs}
\usepackage{enumerate, xspace}
\usepackage[pdftex]{graphicx}
\usepackage{color}

\long\def\symbolfootnote[#1]#2{\begingroup
\def\thefootnote{\fnsymbol{footnote}}\footnote[#1]{#2}\endgroup}

\newtheorem{theorem}{Theorem}[section]
\newtheorem{lemma}[theorem]{Lemma}
\newtheorem{thm}[theorem]{Theorem}
\newtheorem{sublemma}[theorem]{Sublemma}
\newtheorem{prop}[theorem]{Proposition}

\theoremstyle{definition}
\newtheorem{rem}[theorem]{Remark}
\newtheorem{defin}[theorem]{Definition}

\newcommand{\executeiffilenewer}[3]{%
\ifnum\pdfstrcmp{\pdffilemoddate{#1}}%
{\pdffilemoddate{#2}}>0%
{\immediate\write18{#3}}\fi%
}
\newcommand{%
\executeiffilenewer{.svg}{.pdf}%
{inkscape -z -D --file=.svg %
--export-pdf=.pdf --export-latex}%
\input{.pdf_tex}%
}[1]{%
\executeiffilenewer{#1.svg}{#1.pdf}%
{inkscape -z -D --file=#1.svg %
--export-pdf=#1.pdf --export-latex}%
\input{#1.pdf_tex}%
}

\begin{document}

\title{Slim unicorns and uniform hyperbolicity for arc graphs and curve graphs}

\author[S.~Hensel]{Sebastian Hensel}
           \address{The University of Chicago, Department of Mathematics\\
5734 South University Avenue, Chicago, Illinois 60637--1546 USA}
           \email{hensel@uchicago.edu}

\author[P.~Przytycki]{Piotr Przytycki$^{\dag}$}
\address{Inst. of Math., Polish Academy of Sciences\\
 \'Sniadeckich 8, 00-956 Warsaw, Poland}
\email{pprzytyc@mimuw.edu.pl}
\thanks{$\dag$ Partially supported by MNiSW grant N N201 541738 and the Foundation for Polish Science.}

\author[R.~C.~H.~Webb]{Richard C.~H.~Webb}
           \address{University of Warwick, Mathematics Institute\\
Zeeman Building, Coventry CV4 7AL -- UK}
           \email{r.c.h.webb@warwick.ac.uk}

\maketitle

\begin{abstract}
\noindent
We describe unicorn paths in the arc graph and show that they form $1$--slim triangles and are invariant under taking subpaths.
We deduce that all arc graphs are $7$--hyperbolic. Considering the same paths in the arc and curve
graph, this also shows that all curve graphs are $17$--hyperbolic, including closed surfaces.
\end{abstract}

\section{Introduction}
\label{sec:intro}
The \emph{curve graph $\mathcal{C}(S)$} of a compact oriented surface
$S$ is the graph whose vertex set is the set of homotopy classes of
essential simple closed curves and whose edges correspond to disjoint curves.
This graph has turned out to be a fruitful tool in the study of both
mapping class groups of surfaces and of hyperbolic $3$--manifolds.
One prominent feature is that $\mathcal{C}(S)$ is a \emph{Gromov hyperbolic} space (when one endows each edge with length $1$) as was
proven by Masur and Minsky \cite{MM}.
The main result of this paper is to give a new (short and self-contained) proof with a low uniform constant:
\begin{thm}
\label{thm:curve}
If $\mathcal{C}(S)$ is connected, then it is $17$--hyperbolic.
\end{thm}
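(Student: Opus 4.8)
The plan is to reduce Theorem~\ref{thm:curve} to a uniform hyperbolicity statement for arc graphs and then transport the family of paths witnessing that statement into the curve graph. The first ingredient is a purely combinatorial criterion for hyperbolicity: if a connected graph $X$ comes equipped, for every pair of vertices $x,y$, with a connected subgraph (``connecting path'') $P(x,y)\ni x,y$ such that $P(x,y)=P(y,x)$; $P(x,y)$ has uniformly bounded diameter whenever $d_X(x,y)\le 1$; every subpath of $P(x,y)$ between two of its vertices $u,v$ agrees, up to bounded Hausdorff distance, with $P(u,v)$; and the triangles $P(x,y)\cup P(y,z)\cup P(x,z)$ are uniformly slim, then $X$ is $\delta$--hyperbolic with $\delta$ an explicit constant depending only on these bounds. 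The proof is the familiar argument that reparametrisation--stable slim triangles force genuine geodesic triangles to be thin; the work is to track the constant.

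The second ingredient is the construction of unicorn paths in an arc graph $\mathcal{A}(S')$ of a surface $S'$ with nonempty boundary. Given arcs $a,b$ with a chosen endpoint on each, a \emph{unicorn arc} is an arc of the form $a'\cup b'$ where $a'$ is an initial subarc of $a$ and $b'$ a terminal subarc of $b$ meeting $a'$ in a single point; ordering unicorn arcs by the length of their $a$--part gives a path $\mathcal{P}(a,b)$ in $\mathcal{A}(S')$ from $a$ to $b$ (just the edge $ab$ if $a,b$ are already disjoint). One then verifies the hypotheses of the criterion for this family: consecutive unicorn arcs are disjoint, so $\mathcal{P}(a,b)$ is an honest path of controlled diameter on adjacent vertices; any subpath of $\mathcal{P}(a,b)$ is again a unicorn path; and, the substantial point, the triangle of unicorn paths on any three arcs is $1$--slim. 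Each is established by a direct surgery argument tracking the intersection pattern of the arcs involved. The criterion then gives that $\mathcal{A}(S')$ is $7$--hyperbolic whenever it is connected.

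The third ingredient moves from arcs on an auxiliary surface to curves on $S$. When $S$ is closed let $S'$ be $S$ with an open embedded disk $D$ removed (the case $\partial S\ne\emptyset$ is similar), and define $\phi\colon\mathcal{A}(S')\to\mathcal{C}(S)$ by sending an arc $a$ to an essential boundary component of a regular neighbourhood of $a\cup\partial S'$ in $S$. One checks that $\phi$ is coarsely well defined, coarsely Lipschitz (disjoint arcs go to curves at uniformly bounded distance), and coarsely surjective (any essential $\gamma$ can be isotoped off $D$ and realised, up to bounded error, as $\phi(a)$ for an arc running from $\partial S'$, once around $\gamma$, and back). For curves $\gamma,\gamma'$ choose arcs $a,a'$ with $\phi(a),\phi(a')$ close to $\gamma,\gamma'$ and set $Q(\gamma,\gamma')$ to be $\phi(\mathcal{P}(a,a'))$ together with short segments joining its ends to $\gamma,\gamma'$. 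Crucially, the family $\{Q\}$ directly satisfies the hypotheses of the combinatorial criterion inside $\mathcal{C}(S)$: slimness transfers because $\phi$ is coarsely Lipschitz; boundedness on adjacent vertices holds because disjoint curves admit preimage arcs close in $\mathcal{A}(S')$; and subpath invariance is inherited from the unicorn paths. Applying the criterion in $\mathcal{C}(S)$, with bounds inflated only by the explicit Lipschitz and surjectivity constants of $\phi$, yields that $\mathcal{C}(S)$ is $17$--hyperbolic; the finitely many sporadic surfaces whose curve graph is a Farey graph are hyperbolic by direct inspection.

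The main obstacle, I expect, is twofold. First, proving $1$--slimness of unicorn triangles with the optimal constant: this is a genuine surface-topology argument (chasing an innermost intersection point through the three arcs), and any slack is amplified both by the criterion and by $\phi$. Second, the constant bookkeeping: to reach $17$ rather than merely a finite number one must choose the quantitative form of the criterion, the unicorn construction, and the surgery map $\phi$ (including the routing of preimages of disjoint curves) so that their constants compose efficiently.
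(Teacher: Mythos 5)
Your treatment of the arc graph agrees in essence with the paper: unicorn paths, $1$--slim unicorn triangles, subpath invariance, and a Hamenst\"adt-style criterion to conclude $7$--hyperbolicity of $\mathcal{A}(S)$. The transfer to the curve graph, however, is where you diverge from the paper, and where your proposal has a genuine gap.

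The paper does \emph{not} push the family of unicorn paths forward into $\mathcal{C}(S)$ and re-run the abstract criterion there. Instead, for a surface with boundary it works inside the intermediate graph $\mathcal{AC}(S)$: a geodesic in $\mathcal{C}(S)$ is shown to be a $2$--quasigeodesic in $\mathcal{AC}(S)$, and the Hamenst\"adt/Bowditch-style contraction estimate is \emph{rerun directly} for unicorn paths between arcs $\bar a,\bar b$ chosen adjacent to the endpoints $a,b$ of that quasigeodesic, comparing them to the quasigeodesic in $\mathcal{AC}(S)$. This gives a unicorn path staying within $8$ of each side, hence a $1$--centre within $9$ of all three sides in $\mathcal{AC}(S)$, and only then does one apply the $2$--Lipschitz retraction $r\colon \mathcal{AC}^{(0)}(S)\to\mathcal{C}^{(0)}(S)$ to produce a $17$--centre in $\mathcal{C}(S)$. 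The closed case is handled separately: $\mathcal{C}(S)$ is realised as a $1$--Lipschitz retract of $\mathcal{C}(S')$ for the once-punctured $S'$, so no new constants enter. Your route --- transport the path system via $\phi$ and verify the criterion in $\mathcal{C}(S)$ --- does not obviously give $17$, and you do not carry out the bookkeeping. Two concrete problems: (i) the pushed-forward family $Q(\gamma,\gamma')$ depends on a \emph{choice} of arc over each curve, so the subpath-invariance hypothesis of your criterion becomes a coherence statement (that unicorn paths starting from different choices of arcs over the same two curves remain uniformly Hausdorff-close) which you have not justified; (ii) the $2$--Lipschitz constant of $\phi$ is applied before the criterion is run, inflating the $1$--slimness to $2$--slimness and making the final constant substantially larger than $17$ unless additional care is taken. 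A minor point: the sporadic ``Farey graph'' surfaces you mention are exactly those where $\mathcal{C}(S)$ (with disjointness edges) is disconnected, hence excluded by the hypothesis; there is nothing to check by inspection there.
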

Here, we say that a connected graph $\Gamma$ is \emph{$k$--hyperbolic}, if all
of its triangles formed by geodesic edge-paths are $k$--centred. A
triangle is \emph{$k$--centred at a vertex $c\in \Gamma^{(0)}$}, if
$c$ is at distance $\leq k$ from each of its three sides. This notion of
hyperbolicity is equivalent (up to a linear change in the constant) to
the usual slim-triangle condition \cite{Sh}.

After Masur and Minsky's original proof, several other proofs for the
hyperbolicity of $\mathcal{C}(S)$ were given.
Bowditch proved that $k$ can be chosen to grow logarithmically
with the complexity of $S$ \cite{Bow}. A different proof of hyperbolicity
was given by Hamenst\"adt \cite{Ham}. Recently, Aougab \cite{A},
Bowditch \cite{Bow2}, and Clay, Rafi and Schleimer \cite{CRS} have
proved, independently, that $k$ can be chosen independent of $S$.

Our proof of Theorem~\ref{thm:curve} is based on a careful study of Hatcher's
surgery paths in the arc graph $\mathcal{A}(S)$ \cite{Ha0}. The key point is that these
paths form $1$--slim triangles (Section~\ref{sec:surgery}), which
follows from viewing surgered arcs as \emph{unicorn arcs}
introduced as one-corner arcs in \cite{HOP}. We then use a hyperbolicity argument of Hamenst\"adt \cite{Ham}, which provides a better constant than a similar criterion due to Bowditch \cite[Prop~3.1]{Bow2}. This
gives rise to uniform hyperbolicity of the arc graph
(Section~\ref{sec:criterion}) and then also of the curve graph (Section~\ref{sec:curve}). Thus, we also prove:

\begin{thm}
\label{thm:arc}
$\mathcal{A}(S)$ is $7$--hyperbolic.
\end{thm}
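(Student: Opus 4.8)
The plan is to exhibit, for every pair of vertices $a,b$ of $\mathcal{A}(S)$, a preferred path $P(a,b)$ joining them---Hatcher's surgery path \cite{Ha0}, read as a path of \emph{unicorn arcs}---and to show that this family of paths is rigid enough to force hyperbolicity with a small explicit constant, via the hyperbolicity criterion of Hamenst\"adt.

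I would first construct the unicorn paths. Take essential arcs $a,b$ with endpoints on $\partial S$, put them in minimal position, and fix an endpoint $\alpha$ of $a$ and an endpoint $\beta$ of $b$. For $\pi\in a\cap b$, concatenate the subarc of $a$ from $\alpha$ to $\pi$ with the subarc of $b$ from $\pi$ to $\beta$; whenever the result is embedded and essential it is a \emph{unicorn arc} (the one-corner arc of \cite{HOP}). Ordering the unicorn arcs by inclusion of their $a$-subarc produces a sequence $a=c_1,c_2,\dots,c_n=b$, the unicorn path $P(a,b)$. The first point to check---this is done in Section~\ref{sec:surgery}---is that consecutive $c_i,c_{i+1}$ admit disjoint representatives, so that $P(a,b)$ is genuinely an edge-path in $\mathcal{A}(S)$ from $a$ to $b$; in particular $\mathcal{A}(S)$ is connected, $P(a,a)=(a)$, and $P(a,b)=(a,b)$ when $a,b$ are already disjoint.

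Next I would record, again from Section~\ref{sec:surgery}, the two structural features that make everything work. First, for suitable choices of marked endpoints, unicorn triangles are \emph{$1$--slim}: for any third arc $c$, every vertex of $P(a,b)$ lies at distance at most $1$ from $P(a,c)\cup P(c,b)$. Second, unicorn paths are closed under passing to subpaths: if $c$ is a vertex of $P(a,b)$, then after re-marking endpoints appropriately $P(a,c)$ and $P(c,b)$ are exactly the two subpaths of $P(a,b)$ on either side of $c$. Granting these, the hyperbolicity of $\mathcal{A}(S)$ becomes formal. The criterion of Section~\ref{sec:criterion}, following Hamenst\"adt \cite{Ham}, says that a connected graph carrying a family of connecting paths that has uniformly bounded diameter on adjacent pairs, forms uniformly slim triangles, and is closed under subpaths is Gromov hyperbolic, and moreover each such path then lies within a universally bounded Hausdorff distance of any geodesic with the same endpoints. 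Feeding in slimness constant $1$, and transporting the $1$--slim estimate for unicorn triangles onto honest geodesic triangles by way of that Hausdorff bound, a careful accounting of the constants delivers the claim: every geodesic triangle of $\mathcal{A}(S)$ is $7$--centred.

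I expect the real obstacle to lie in the $1$--slimness of unicorn triangles. Given a unicorn arc $c$ on $P(a,b)$, built from $\alpha$, a corner $\pi\in a\cap b$, and $\beta$, one must produce a unicorn arc on $P(a,c')$ or on $P(c',b)$---for a cleverly chosen intermediate arc $c'$---that is disjoint from, or meets only once, the arc $c$. This is a careful case analysis tracking how the defining subarcs of the various unicorn arcs cross, and it is exactly here that the one-corner structure is used; verifying that $P(a,b)$ is an edge-path and establishing the subpath property are in the same spirit but lighter. The secondary, purely quantitative, difficulty is to keep the constants in the criterion---and in the reduction from unicorn triangles to geodesic triangles---tight enough to land exactly on $7$ rather than on some larger universal bound.
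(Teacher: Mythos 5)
Your outline matches the paper's strategy closely: unicorn (surgery) paths, $1$--slim unicorn triangles, the subpath lemma, and the Hamenst\"adt-type stability argument (which the paper carries out as Proposition~\ref{prop:close}, showing every unicorn path lies within distance $6$ of any geodesic joining its endpoints). The one genuine gap is in the last step, where you write that ``feeding in slimness constant $1$ \dots a careful accounting of the constants delivers'' $7$. That accounting does not close if all you use is $1$--slimness (Lemma~\ref{lem:triangle}).

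Here is the issue concretely. From $1$--slimness, pick the last vertex $c_i$ on $\mathcal{P}(a^\alpha,b^\beta)$ that is adjacent to a vertex of $\mathcal{P}(a^\alpha,d^\delta)$; then $c_{i+1}$ is adjacent to a vertex of $\mathcal{P}(d^\delta,b^\beta)$. This makes $c_i$ lie at distance $0,1,2$ from the three unicorn paths, hence (adding the Hausdorff bound $6$ from Proposition~\ref{prop:close}) at distance $\le 6, 7, 8$ from the three geodesic sides --- only $8$--centred. To reach $7$, the paper proves the strictly stronger Lemma~\ref{lem:minsize1}: the three unicorn paths admit a triple of \emph{pairwise adjacent} vertices, a genuine ``$1$--centred'' statement requiring an extra argument beyond slimness (one compares, along $d$, the intersection points closest to $\delta$ on consecutive unicorn arcs $c_i,c_{i+1}$ and shows these produce a common centre). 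With that refinement each of the three chosen vertices is within $1+6=7$ of each geodesic side, giving $7$--hyperbolicity. So your route is right, but you need to upgrade the $1$--slim lemma to this $1$--centred version to hit the stated constant.
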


The arc graph was proven to be hyperbolic by Masur and Schleimer
\cite{MS}, and recently another proof has been given by Hilion and
Horbez \cite{HH}. Uniform hyperbolicity, however, was not known.

\section{Preliminaries}

Let $S$ be a compact oriented topological surface. We consider arcs on $S$ that are properly embedded and \emph{essential}, i.e.\ not homotopic into
$\partial S$. We also consider embedded closed curves on $S$ that are not homotopic to a point or into $\partial S$.
The \emph{arc and curve graph} $\mathcal{AC}(S)$ is the graph
whose vertex set $\mathcal{AC}^{(0)}(S)$ is the set of homotopy classes of
arcs and curves on $(S,\partial S)$. Two vertices are connected by an edge in $\mathcal{AC}(S)$ if the
corresponding arcs or curves can be realised disjointly. The \emph{arc graph} $\mathcal{A}(S)$ is the subgraph of $\mathcal{AC}(S)$ induced on the vertices that are homotopy classes of arcs. Similarly,
the \emph{curve graph} $\mathcal{C}(S)$ is the subgraph of $\mathcal{AC}(S)$ induced on the vertices that are homotopy classes of curves.

Let $a$ and $b$ be two arcs on $S$. We say that $a$ and
$b$ are in \emph{minimal position} if the number of intersections between $a$ and $b$ is minimal in the
homotopy classes of $a$ and $b$. It is well known that this is equivalent to $a$ and $b$ being transverse and having no discs in $S-(a\cup b)$ bounded by a subarc of $a$ and a subarc of $b$ (\emph{bigons}) or bounded by a subarc of $a$, a subarc of $b$ and a subarc of $\partial S$ (\emph{half-bigons}).

\section{Unicorn paths}
\label{sec:surgery}

We now describe Hatcher's surgery paths \cite{Ha0} in the guise of unicorn paths.

\begin{defin}
Let $a$ and $b$ be in minimal position. Choose endpoints $\alpha$ of $a$ and $\beta$ of $b$. Let $a'\subset a, b'\subset b$ be subarcs with endpoints $\alpha,\beta$ and a common endpoint $\pi$ in $a\cap b$. Assume that $a'\cup b'$ is an embedded arc.
Since $a,b$ were in minimal position, the arc $a'\cup b'$ is essential. We say that $a'\cup b'$ is a
\emph{unicorn arc obtained from $a^\alpha,b^\beta$}. Note that it is uniquely determined by $\pi$, although not all $\pi\in a\cap b$ determine unicorn arcs, since the components of $a-\pi, b-\pi$ containing $\alpha,\beta$ might intersect outside $\pi$.

We linearly order unicorn arcs so that $a'\cup b'\leq a''\cup b''$ if and only if $a''\subset a'$ and $b'\subset b''$. Denote by $(c_1,\ldots, c_{n-1})$ the ordered set of unicorn arcs. The sequence $\mathcal{P}(a^\alpha,b^\beta)=(a=c_0,c_1,\ldots, c_{n-1}, c_n=b)$ is called the \emph{unicorn path between $a^\alpha$ and $b^\beta$}.
\end{defin}

The homotopy classes of $c_i$ do not depend on the choice of representatives of the homotopy classes of $a$ and $b$.

\begin{rem}
Consecutive arcs of the unicorn path represent adjacent vertices in the arc graph.
Indeed, suppose $c_i=a'\cup b'$ with $2\leq i\leq n-1$ and let $\pi'$ be the first point on $a-a'$ after $\pi$ that lies on $b'$.
Then $\pi'$ determines a unicorn arc. By definition of $\pi'$, this arc is $c_{i-1}$. Moreover, it can be homotoped off $c_i$, as desired.
The fact that $c_0c_1$ and $c_{n-1}c_n$ form edges follows similarly.
\end{rem}

We now show the key $1$--slim triangle lemma.

\begin{lemma}
\label{lem:triangle}
Suppose that we have arcs with endpoints $a^\alpha,b^\beta,d^\delta$, mutually in minimal position. Then for every $c\in \mathcal{P}(a^\alpha, b^\beta)$,
there is $c^*\in \mathcal{P}(a^\alpha,d^\delta)\cup \mathcal{P}(d^\delta,b^\beta)$, such that $c,c^*$ represent adjacent vertices in $\mathcal{A}(S)$.
\end{lemma}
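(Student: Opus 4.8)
The plan is to produce the required $c^*$ explicitly, reading it off from the first point at which $d$ crosses $c$. First I would pass to representatives of $a,b,d$ in general position: pairwise transverse, with no triple points and with all six endpoints distinct points of $\partial S$. Writing $c=a'\cup b'$ with $a'=[\alpha,\pi]_a$, $b'=[\pi,\beta]_b$ and $a'\cap b'=\{\pi\}$, this ensures $d$ meets $c$ transversely in finitely many interior points, none of which is the corner $\pi$. If $c\in\{a,b\}$, or if $d$ can be homotoped off $c$, then one of $a,b,d$ already lies on one of the two target paths and is disjoint from (or equal to) $c$, so we are done --- in the ``equal'' case we pass to an adjacent arc on that path. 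So assume $c$ is a genuine unicorn arc that meets $d$, and traverse $d$ from $\delta$ until it first meets $c$, at a point $p$. Then $p$ lies in the interior of $a'$ or of $b'$, and the subarc $[p,\delta]_d$ of $d$ from $p$ to $\delta$ satisfies $[p,\delta]_d\cap c=\{p\}$; this last fact is the whole reason for choosing $p$ this way.

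Next I would build $c^*$ according to which of $a',b'$ contains $p$. Suppose $p\in a'$, the case $p\in b'$ being symmetric with $(a,\alpha)$ replaced by $(b,\beta)$ and $\mathcal{P}(a^\alpha,d^\delta)$ by $\mathcal{P}(d^\delta,b^\beta)$. Then $p\in a\cap d$, and I set $c^*:=[\alpha,p]_a\cup[p,\delta]_d$, noting $[\alpha,p]_a\subseteq a'$. Since $[\alpha,p]_a\subseteq c$ and $[p,\delta]_d$ meets $c$ only in $p$, these two subarcs meet only at $p$, so $c^*$ is embedded; similarly the components of $a\setminus p$ and $d\setminus p$ containing $\alpha$ and $\delta$ are disjoint (being contained in $[\alpha,p]_a$ and $[p,\delta]_d$ respectively, which meet only in $p$). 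Hence $p$ determines a unicorn arc obtained from $a^\alpha,d^\delta$, essential because $a$ and $d$ are in minimal position, and therefore $c^*\in\mathcal{P}(a^\alpha,d^\delta)$, one of the two allowed paths.

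It remains to check that $c^*$ and $c$ represent adjacent vertices of $\mathcal{A}(S)$, and this is the one step that needs genuine care. From $[p,\delta]_d\cap c=\{p\}$ one reads off that $c^*\cap c$ is exactly the subarc $[\alpha,p]_a$ of $c$: a single arc containing the common endpoint $\alpha$, with $c$ and $c^*$ disjoint elsewhere. An arc overlapping $c$ in precisely such an initial subarc can be homotoped off $c$ --- cut $S$ along $c$ and replace the shared subarc $[\alpha,p]_a$ by a push-off into whichever of the two boundary copies of $c$ the arc $[p,\delta]_d$ departs into at $p$, exactly the kind of push-off already used in the Remark. The resulting arc is disjoint from $c$, so $c$ and $c^*$ are adjacent in $\mathcal{A}(S)$ (and if $c^*$ happens to represent the same vertex as $c$, replace it by a neighbour of $c^*$ on its unicorn path). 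I expect this concluding push-off --- together with the routine check that, with thin enough neighbourhoods, it really stays off $c$ --- to be the main obstacle to write out carefully; everything before it is bookkeeping with subarcs of $a$, $b$ and $d$.
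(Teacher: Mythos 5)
Your proposal is correct and follows essentially the same strategy as the paper: traverse $d$ from $\delta$ to its first intersection point $p$ (the paper's $\sigma$, the endpoint of the maximal subarc $d'$ of $d$ with interior disjoint from $c$) with $c$, note that $p$ lies in $a'$ or $b'$, and take $c^*$ to be the unicorn arc from $a^\alpha,d^\delta$ (resp.\ $d^\delta,b^\beta$) determined by $p$. Your write-up spells out the embeddedness and adjacency checks that the paper leaves implicit, but the construction and the reason it works are identical.
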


\begin{proof}
If $c=a'\cup b'$ is disjoint from $d$, then there is nothing to prove. Otherwise, let $d'\subset d$ be the maximal subarc with endpoint $\delta$ and with interior disjoint from $c$. Let $\sigma\in c$ be the other endpoint of $d'$. One of the two subarcs into which $\sigma$ divides $c$ is contained
in $a'$ or $b'$. Without loss of generality, assume that it is contained in $a'$, denote it by $a''$. Then $c^*=a''\cup d'\in \mathcal{P}(a^\alpha,d^\delta)$.
Moreover, $c^*$ and $c$ represent adjacent vertices in $\mathcal{A}(S)$, as desired.
\end{proof}

Note that we did not care whether $c$ was in minimal position with $d$ or not. A slight enhancement shows that the triangles are $1$--centred:

\begin{lemma}
\label{lem:minsize1}
Suppose that we have arcs with endpoints $a^\alpha,b^\beta,d^\delta$, mutually in minimal position. Then there are pairwise adjacent vertices on $\mathcal{P}(a^\alpha, b^\beta),\mathcal{P}(a^\alpha,d^\delta)$ and $\mathcal{P}(d^\delta,b^\beta)$.
\end{lemma}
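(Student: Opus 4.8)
The plan is to apply Lemma~\ref{lem:triangle} to a carefully chosen vertex $c$ on $\mathcal{P}(a^\alpha,b^\beta)$, namely one of the extreme unicorn arcs that already meets $d$ in a controlled way, and then to iterate the surgery construction from the proof of Lemma~\ref{lem:triangle} simultaneously on two of the three paths so that the three resulting arcs become pairwise disjoint. Concretely, I would start with $c=c_1\in\mathcal{P}(a^\alpha,b^\beta)$, the first unicorn arc, which consists of essentially all of $a$ together with a short initial segment $b'$ of $b$ at $\beta$. If $c$ is already disjoint from $d$, then $c$ together with $a$ (on $\mathcal{P}(a^\alpha,d^\delta)$) and $b$ (on $\mathcal{P}(d^\delta,b^\beta)$) — or rather suitable vertices near them — are the desired triple, and we are essentially reduced to the case handled by the first lemma. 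Otherwise $c$ meets $d$, and I run the argument of Lemma~\ref{lem:triangle} to produce $c^*=a''\cup d'\in\mathcal{P}(a^\alpha,d^\delta)$ adjacent to $c$.

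The point is to get a \emph{third} arc on $\mathcal{P}(d^\delta,b^\beta)$ adjacent to both. Here I would use that $c^*$ is built from a subarc $d'$ of $d$ issuing from $\delta$ and a subarc $a''$ of $a$; in particular $c^*$ shares the endpoint $\delta$ with $d$. Now apply the surgery construction of Lemma~\ref{lem:triangle} once more, but this time to the pair $(d^\delta,b^\beta)$ and the arc $c$ (playing the role of ``$d$'' in that lemma's proof): since $c$ meets $b$ only in the short segment $b'$, the maximal subarc of $c$ issuing from an appropriate endpoint with interior disjoint from the relevant unicorn arc of $\mathcal{P}(d^\delta,b^\beta)$ is easy to locate, and it produces a vertex $c^{**}\in\mathcal{P}(d^\delta,b^\beta)$ adjacent to $c$. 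It then remains to check that $c^*$ and $c^{**}$ are adjacent to \emph{each other}, which should follow because both are assembled from the same few pieces: $d'$ and $a''$ on one side, and a piece of $d$ together with a piece of $b'$ on the other, and these can be homotoped apart using that $a''\subset c_1$ overlaps $b'$ only near $\pi$.

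The main obstacle I anticipate is precisely this last disjointness check between $c^*$ and $c^{**}$: the naive choice of $c$ may not make all three surgeries compatible, and one may have to choose $c$ more cleverly (for instance, the first unicorn arc whose defining point $\pi$ lies on the maximal $d$-free subarc, or symmetrically pick the extreme unicorn arc on whichever of the three sides interacts most simply with the opposite vertex). A clean way to organize this is: among all three pairs, pick the configuration so that one of the arcs, say $c=c_1$ on $\mathcal{P}(a^\alpha,b^\beta)$, is within distance $1$ of $a$ itself; then one surgery is with $d$ (giving $c^*\in\mathcal{P}(a^\alpha,d^\delta)$) and the other is automatic because $a$ — hence something adjacent to $c$ — already lies at distance $1$ from every vertex of $\mathcal{P}(a^\alpha,d^\delta)$ near its $a$-end and we only need a single vertex of $\mathcal{P}(d^\delta,b^\beta)$ adjacent to both. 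I expect the write-up to reduce, after the right choice, to one application of Lemma~\ref{lem:triangle} plus a short direct verification that the surgered arc $a''\cup d'$ can be homotoped off the maximal $c$-free subarc of $d$, exactly as in the proof of Lemma~\ref{lem:triangle} but now tracking two arcs at once.
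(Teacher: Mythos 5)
Your plan does not close the main gap that you yourself flag, and the choice $c=c_1$ is not the right one; the paper's proof hinges on locating a specific ``transition'' index along $\mathcal{P}(a^\alpha,b^\beta)$ that your proposal never identifies.

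Concretely, Lemma~\ref{lem:triangle} produces, for a given $c$, a single arc $c^*$ on $\mathcal{P}(a^\alpha,d^\delta)\cup\mathcal{P}(d^\delta,b^\beta)$ adjacent to $c$ --- on \emph{one} of the two paths, determined by whether the subarc of $c$ cut off by $\sigma$ lies in $a'$ or in $b'$. To get a pairwise-adjacent triple you need a vertex on \emph{each} of the two remaining paths, and they must also be adjacent to each other. Running the surgery of Lemma~\ref{lem:triangle} ``once more'' on $(d^\delta,b^\beta)$ against $c$ is a different operation and gives no control over its disjointness from $c^*$; you identify this as the main obstacle but do not resolve it. Your easy case is also not really easy: if $c_1$ is disjoint from $d$, this does not make $c_1$ disjoint from the unicorn arcs of $d^\delta,b^\beta$ (which contain pieces of $b$), nor does it make $a$ and $d$ disjoint, so the candidate triple $\{c_1,a,d\}$ need not be pairwise adjacent.

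The paper instead scans consecutive pairs $c_i=a'\cup b'$, $c_{i+1}=a''\cup b''$ on $\mathcal{P}(a^\alpha,b^\beta)$ and tracks, for each, the intersection point with $d$ closest to $\delta$. It locates an index $i$ where this point ``jumps'' from the $a$-part of $c_i$ to the $b$-part of $c_{i+1}$; call these points $\pi$ and $\sigma$. With one more auxiliary point $\pi'$ (the intersection of $a$ with the subarc $\delta\sigma\subset d$ closest to $\alpha$ along $a$), the three vertices --- $c_{i+1}$, the unicorn arc of $d^\delta,b^\beta$ determined by $\sigma$, and the unicorn arc of $a^\alpha,d^\delta$ determined by $\pi'$ --- are pairwise adjacent, precisely because of how $i$, $\sigma$, $\pi'$ were chosen. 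This discrete intermediate-value argument is the idea your proposal is missing; starting from $c_1$ and applying Lemma~\ref{lem:triangle} twice does not recover it.
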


\begin{proof}
If two of $a,b,d$ are disjoint, then there is nothing to prove. Otherwise for unicorn arcs $c_i=a'\cup b', c_{i+1}=a''\cup b''$
let $\pi,\sigma$ their intersection points with $d$ closest to $\delta$ along $d$. There is $0\leq i <n$ such that $\pi\in a', \sigma\in b''$. Without loss of generality assume that $\pi$ is not farther than $\sigma$ from $\delta$. Let $\pi'$ be the intersection point of $a$ with the subarc $\delta\sigma\subset d$ that is closest to $\alpha$ along $a$. Then $c_{i+1}$, the unicorn arc obtained from $d^\delta,b^\beta$ determined by $\sigma$, and the unicorn arc obtained from $a^\alpha,d^\delta$ determined by $\pi'$, represent three adjacent vertices in $\mathcal{A}(S)$.
\end{proof}

We now prove that unicorn paths are invariant under taking subpaths, up to one exception.

\begin{lemma}
\label{lem:subpath}
For every $0\leq i<j\leq n$, either $\mathcal{P}(c^\alpha_i,c^\beta_j)$ is a subpath of $\mathcal{P}(a^\alpha,b^\beta)$, or $j=i+2$ and $c_i,c_j$
represent adjacent vertices of $\mathcal{A}(S)$.
\end{lemma}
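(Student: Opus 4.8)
The plan is to describe $\mathcal{P}(c^\alpha_i,c^\beta_j)$ explicitly and to compare it, arc by arc, with $\mathcal{P}(a^\alpha,b^\beta)$. Write $c_k=a'_k\cup b'_k$ with corner $\pi_k\in a\cap b$, where $a'_k\subseteq a$ runs from $\alpha$ to $\pi_k$ and $b'_k\subseteq b$ runs from $\pi_k$ to $\beta$. As $i<j$, the definition of the order gives $a'_j\subseteq a'_i$ and $b'_i\subseteq b'_j$, so $c_i$ and $c_j$ coincide along the subarc $a'_j$ of $a$ and along the subarc $b'_i$ of $b$, and differ only along their ``middles'' $M_a\subseteq a$ and $M_b\subseteq b$, each joining $\pi_j$ to $\pi_i$. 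Two remarks will do most of the work. First, every intermediate corner $\pi_{i+1},\dots,\pi_{j-1}$ lies in the interior of $M_a$ and of $M_b$. Second, and this is the crucial point: since the unicorn arc $c_j$ is embedded, $a'_j$ meets $b'_j$ only at $\pi_j$, and as $b'_i\subseteq b'_j$ with $\pi_j\notin b'_i$ it follows that $a'_j$ is disjoint from $b'_i$.

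Next I would fix minimal-position representatives of $c_i$ and $c_j$ that lie near $a\cup b$, so that every point of $c_i\cap c_j$ corresponds to a point of $a\cap b$. Let $c=p\cup q$ be a unicorn arc obtained from $c^\alpha_i,c^\beta_j$, with $p\subseteq c_i$ running from $\alpha$ to the corner $\pi$ and $q\subseteq c_j$ running from $\pi$ to $\beta$. Since $\alpha$ is an endpoint of $a'_i$, the subarc $p$ is either contained in $a'_i$ or equals $a'_i$ followed by a proper subarc of $b'_i$; symmetrically $q$ is contained in $b'_j$ or equals $b'_j$ preceded by a proper subarc of $a'_j$. If $p\subseteq a'_i$ and $q\subseteq b'_j$ then $\pi\in a\cap b$, the arc $c=a|_{[\alpha,\pi]}\cup b|_{[\pi,\beta]}$ is a unicorn arc obtained from $a^\alpha,b^\beta$, and $\pi\in a'_i\cap b'_j$ forces $c=c_k$ for some $i\le k\le j$. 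If exactly one of $p,q$ runs past the junction, a direct computation collapses $c$ to $c_i$ or to $c_j$. If both run past the junction, then $\pi$ lies on $b'_i$ and on $a'_j$ at once, which is impossible by the second remark. Hence every unicorn arc obtained from $c^\alpha_i,c^\beta_j$ is one of $c_i,\dots,c_j$, and, ordering them, $\mathcal{P}(c^\alpha_i,c^\beta_j)$ is a subsequence of $(c_i,c_{i+1},\dots,c_j)$ containing its endpoints.

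It remains to decide when this subsequence is all of $(c_i,\dots,c_j)$. Using the first remark one checks that for $i<m<j$, whenever $\pi_m$ is a point of $c_i\cap c_j$ the arc $c_m=a|_{[\alpha,\pi_m]}\cup b|_{[\pi_m,\beta]}$ is a unicorn arc obtained from $c^\alpha_i,c^\beta_j$; so $c_m$ can be missing from $\mathcal{P}(c^\alpha_i,c^\beta_j)$ only if $\pi_m$ is destroyed in the passage to minimal position, i.e.\ only if some bigon (or half-bigon) between $c_i$ and $c_j$ has $\pi_m$ as a corner. Now no bigon between $c_i$ and $c_j$ can have two of the intermediate corners $\pi_k$ as its corners, for its boundary would then be a subarc of $a$ together with a subarc of $b$ bounding a disc, contradicting minimal position of $a$ and $b$; combining this with the second remark I expect to conclude that a corner can be destroyed only when there is a single intermediate corner -- that is, when $j=i+2$ -- in which case $c_i$ and $c_j$ can be made disjoint and $\mathcal{P}(c^\alpha_i,c^\beta_j)=(c_i,c_j)$. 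This final step -- pinning down exactly which intersection points of $c_i$ and $c_j$ can disappear under a homotopy -- is where I expect the real difficulty to lie, and where the absence of bigons and half-bigons between $a$ and $b$ must be exploited fully.
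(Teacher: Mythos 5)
Your decomposition of $c_i$ and $c_j$ into a shared initial piece $a'_j$, a shared terminal piece $b'_i$, and the ``middles'' $M_a=a|_{[\pi_j,\pi_i]}$, $M_b=b|_{[\pi_j,\pi_i]}$ is sound, as is the observation that $a'_j\cap b'_i=\emptyset$ and that the intermediate corners lie in the interiors of both middles. These are genuine ingredients. But your route diverges from the paper's, and the divergence is where the gap is. The paper does not attack general $i,j$ directly: it first reduces (by iterating from the ends) to the extreme case $i=0$, $j=n-1$, where $c_i=a$ and $c_j=c_{n-1}$ has the special feature that its $a$-part has interior disjoint from $b$. In that special case the paper proves a precise statement (Sublemma~\ref{sub:minimal_position}): pushing $c_{n-1}$ off $a$ yields a pair that is either already in minimal position or has exactly one half-bigon, of a specific shape, whose removal produces minimal position; and the presence of that half-bigon is then shown to force $j=2$. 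That sublemma is the whole difficulty, and it is exactly the step you defer to at the end of your argument.

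Working with general $c_i,c_j$ as you do, the intersection pattern between the pushed-off representatives is much harder to control. Your bigon argument (``no bigon between $c_i$ and $c_j$ can have two intermediate corners, since its boundary would be a subarc of $a$ and a subarc of $b$'') is correct as far as it goes, because any subarc of $c_i$ between two points interior to $M_a$ stays in $M_a\subset a$, and likewise for $c_j$ in $M_b\subset b$. But this does not rule out bigons with one corner at an intermediate $\pi_m$ and the other at a non-corner point of $M_a\cap M_b$, nor half-bigons at intermediate corners, nor several such removals happening for distinct $m$. You acknowledge this explicitly: ``this final step\ldots is where I expect the real difficulty to lie.'' So what you have is a plausible plan with the crux unproved; the missing piece is precisely the minimal-position analysis supplied by Sublemma~\ref{sub:minimal_position}, and the reduction to $i=0$, $j=n-1$ is what makes that analysis tractable (at most one half-bigon, no bigons). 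Also, the phrase ``fix minimal-position representatives of $c_i$ and $c_j$ that lie near $a\cup b$, so that every point of $c_i\cap c_j$ corresponds to a point of $a\cap b$'' assumes the conclusion of that analysis rather than proving it: representatives near $a\cup b$ need not be in minimal position, and determining which near-$a\cup b$ intersections survive is the content you still owe.

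One further small issue: in the case where ``exactly one of $p,q$ runs past the junction,'' you assert that ``a direct computation collapses $c$ to $c_i$ or to $c_j$.'' This needs an argument. If $p$ runs into $b'_i$, the corner $\pi$ would be a transverse intersection of the pushed-off $c_i$ and $c_j$ lying on the segment $b'_i\subseteq b'_j$ along which the two representatives run parallel; generically no such intersection exists except possibly adjacent to $\pi_i$, and turning this into a clean dichotomy requires the same minimal-position bookkeeping as above. I would encourage you to adopt the paper's reduction: it converts the problem into one where $c_j$ meets $a$ in a single well-understood configuration, and Sublemma~\ref{sub:minimal_position} then does all the work you are currently leaving open.
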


Before we give the proof, we need the following.
\begin{sublemma}
\label{sub:minimal_position}
Let $c=c_{n-1}$, which means that $c=a'\cup b'$ with the interior of $a'$ disjoint from $b$. Let $\tilde{c}$ be the arc
homotopic to $c$ obtained by homotopying $a'$ slightly off $a$ so that $a'\cap\tilde{c}=\emptyset$.
Then either $\tilde{c}$ and $a$ are in minimal position, or they bound exactly one half-bigon, shown in Figure 1.
In that case, after homotopying $\tilde{c}$ through that half-bigon to $\bar{c}$, the arcs $\bar{c}$ and $a$ are already in minimal position.
\end{sublemma}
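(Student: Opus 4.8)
The plan is to analyze what kind of disc (bigon or half-bigon) can appear between $\tilde c$ and $a$, and to use strongly the facts that $a$ and $b$ are in minimal position and that the interior of $a'$ is disjoint from $b$. First I would set up notation: recall $c=c_{n-1}=a'\cup b'$, where $b'$ is a subarc of $b$ from $\beta$ to the corner $\pi$, and $a'$ is the subarc of $a$ from $\pi$ to $\alpha$, with the interior of $a'$ disjoint from $b$ (this is exactly the characterization of $c_{n-1}$ coming from the linear order, since for $c_{n-1}$ we have $a'$ smallest). The arc $\tilde c$ is obtained by pushing $a'$ slightly to one side of $a$, keeping the $b'$ part fixed; so $\tilde c$ is disjoint from $a'$, meets $a$ only near the two endpoints $\pi$ and $\alpha$ of $a'$, and along $b'$ it still agrees with $b$, hence meets $a$ in $\mathrm{int}(b')\cap a$ together with whatever happens near $\pi$ and $\alpha$. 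The intersections of $\tilde c$ with $a$ in the interior are therefore exactly the points of $\mathrm{int}(b')\cap a$, which are genuine (no bigons among them, since $a,b$ are in minimal position), plus a controlled configuration at the endpoint $\alpha$ of $a$, where $a'$ ends on $\partial S$.

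The key step is to locate the possible innermost bigon or half-bigon $R$ between $\tilde c$ and $a$. A bigon or half-bigon is bounded by a subarc $x$ of $\tilde c$ and a subarc $y$ of $a$ (plus possibly a subarc of $\partial S$). I would argue that $x$ cannot be contained in the pushed-off copy of $a'$ (it is disjoint from $a$ there), so $x$ must meet the $b'$-part of $\tilde c$; and since $a$ and $b$ are in minimal position, no bigon or half-bigon can be formed by a subarc of $b$ and a subarc of $a$ alone. Hence $x$ must straddle the corner region near the endpoint $\alpha$: that is, $x$ runs from the point where $\tilde c$ last crosses $a$ (on the $b'$ side, i.e.\ the point of $\mathrm{int}(b')\cap a$ nearest to $\pi$ that is ``exposed'', equivalently the first intersection point $\pi'$ of $a$ with $b$ past $\pi$ — this is exactly the point appearing in the Remark), along $b$, around $\pi$, and out along the pushed copy of $a'$ to the boundary at $\alpha$. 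Since the pushed-off $a'$-part of $\tilde c$ is on one definite side of $a$, this produces at most one region $R$, and because one of its sides runs to the endpoint $\alpha\in\partial S$, $R$ is a half-bigon, not a bigon — this is the half-bigon of Figure~1. I would then check that there can be at most one such half-bigon (there is only one corner $\pi$ and one endpoint $\alpha$ of $a'$, and the side of $a$ on which $a'$ was pushed is fixed), so after homotoping $\tilde c$ across $R$ to get $\bar c$, the only potentially offending region is removed. Finally, the remaining intersections of $\bar c$ with $a$ are precisely the essential points of $\mathrm{int}(b')\cap a$, among which (being a subset of $b\cap a$ with $a,b$ in minimal position) there are no bigons or half-bigons; hence $\bar c$ and $a$ are in minimal position.

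The main obstacle I expect is the careful local bookkeeping near the corner $\pi$ and near the boundary endpoint $\alpha$: one must verify that pushing $a'$ off $a$ genuinely creates no intersections other than the claimed ones, that the candidate region $R$ is embedded and really is an innermost half-bigon (rather than, say, several nested half-bigons or a bigon hidden by the pushing direction), and that homotoping across $R$ does not create new intersections elsewhere. A clean way to handle this is to describe $\bar c$ directly: it is the arc obtained from $c$ by first replacing the $b'$-part from $\beta$ up to $\pi'$ (the first point of $b\cap a$ after $\pi$ along $b$) and then running along $a$ from $\pi'$ to $\alpha$ on the appropriate side — i.e.\ $\bar c$ is homotopic to the unicorn arc $c_{n-1}$ but realized so that it traverses $a$ near $\pi'$, whose intersections with $a$ are then visibly just $\mathrm{int}(b')\cap a$ minus the part cut off, all of which are essential. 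Comparing $\tilde c$, $\bar c$ and $c$ via the explicit half-bigon in Figure~1 then makes the ``exactly one half-bigon'' statement transparent, and minimal position of $\bar c$ with $a$ reduces to minimal position of $a$ with $b$.
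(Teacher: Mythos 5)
Your setup matches the paper's: push $a'$ off $a$ to get $\tilde c$, observe that $\tilde c\cap a\subset \operatorname{int}(b')\cap a$, rule out bigons because a bigon of $\tilde c$ and $a$ would yield a bigon of $b$ and $a$, and locate any half-bigon at the corner, with the $\tilde c$-side running out to $\tilde\alpha$. The paper then also pins down the $a$-side $a''$ of the half-bigon: its interior is disjoint from $b$, hence it does \emph{not} contain $\alpha$ (otherwise $\pi\in\operatorname{int}(a'')$ would lie on $b$), so $a''$ runs to the \emph{other} endpoint of $a$. You don't address which endpoint of $a$ lies on the $a$-side, and this is needed both to get the picture of Figure~1 and to set up the final step.

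The genuine gap is in your last step. You argue that $\bar c$ is in minimal position with $a$ ``since the remaining intersections of $\bar c$ with $a$ are a subset of $b\cap a$, among which there are no bigons or half-bigons.'' That inference is invalid: a bigon or half-bigon of $\bar c$ and $a$ is bounded by a subarc of $\bar c$, not of $b$, and that subarc may run through the pushed-off copy of $a''$, which is not part of $b$. Concretely, $\bar c$ could a priori bound a \emph{new} half-bigon with $a$ whose $\bar c$-side goes through the pushed $a''$ to the far endpoint of $a$; minimal position of $a$ and $b$ says nothing directly about this. The paper closes this by re-running the half-bigon analysis on $\bar c$ with the roles of the two endpoints of $a$ interchanged: any half-bigon $\bar c' a'''$ would force $\alpha\in a'''$, hence $a'\subsetneq a'''$, hence $\pi\in\operatorname{int}(a''')$ would lie on $b$, contradicting that $\operatorname{int}(a''')$ is disjoint from $b$. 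This symmetric re-application is the idea your proposal is missing; the claim that ``minimal position of $\bar c$ with $a$ reduces to minimal position of $a$ with $b$'' is precisely where the argument breaks without it.

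For the same reason, your uniqueness argument (``only one corner $\pi$ and one endpoint $\alpha$'') is too loose on its own: a priori there could be several half-bigons with different corners $\pi'$. The paper sidesteps this by not proving uniqueness directly, but instead showing that after crossing \emph{one} half-bigon, the resulting arc is already in minimal position, which is both stronger and cleaner.
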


\begin{proof}
Let $\tilde{\alpha}$ be the endpoint of $\tilde{c}$ corresponding to $\alpha$ in $c$.
The arcs $\tilde{c}$ and $a$ cannot bound a bigon, since then $b$ and $a$ would bound a bigon contradicting minimal position.
Hence if $\tilde{c}$ and $a$ are not in minimal position, then they bound a half-bigon $\tilde{c}'a''$, where $\tilde{c}'\subset\tilde{c}, a''\subset a$.
Let $\pi'=\tilde{c}'\cap a''$. The subarc $\tilde{c}'$ contains $\tilde{\alpha}$, since otherwise $a$ and $b$ would bound a half-bigon.
Since the interior of $a'$ is disjoint from $b$, by minimal position of $a$ and $b$ the interior of $a''$ is also disjoint from $b$.
In particular, $a''$ does not contain $\alpha$, since otherwise $a'\subsetneq a''$ and $\pi$ would lie in the interior of $a''$.
Moreover, $\pi$ and $\pi'$ are consecutive intersection points with $a$ on $b$ (see Figure 1).

Let $b''$ be the component of $b-\pi'$ containing $\beta$. Let $\bar{c}$ be obtained from $a''\cup b''$ by homotopying it off $a''$.
Applying to $\bar{c}$ the same argument as to $\tilde{c}$, but with the endpoints of $a$ interchanged,
we get that either $\bar{c}$ is in minimal position with $a$ or there is a a half-bigon $\bar{c}'a'''$, where $\bar{c}'\subset \bar{c}, a'''\subset a$. But in the latter case we have $\alpha\in a'''$, which implies $a'\subsetneq a'''$ contradicting the fact that the interior of $a'''$ should be
disjoint from $b$.
\end{proof}

\begin{figure}
\executeiffilenewer{fig.svg}{fig.pdf}%
{inkscape -z -D --file=fig.svg %
--export-pdf=fig.pdf --export-latex}%
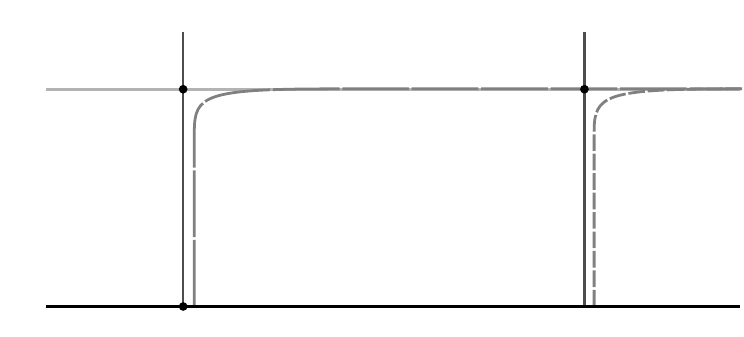%

\caption{The only possible half-bigon between $\tilde{c}$ and $a$}
\end{figure}

\begin{proof}[Proof of Lemma~\ref{lem:subpath}]
We can assume $i=0$, so that $c_i=a$, and $j=n-1$, so that $c_j=a'\cup b'$, where $a'$ intersects $b$ only at its endpoint $\pi$ distinct from $\alpha$. Let $\tilde{c}$ be obtained from $c=c_j$ as in Sublemma~\ref{sub:minimal_position}. If $\tilde{c}$ is in minimal position with $a$, then
points in $(a\cap b)-\pi$ determining unicorn arcs obtained from $a^\alpha,b^\beta$ determine the same unicorn arcs obtained from
$a^\alpha,\tilde{c}^\beta$, and exhaust them all, so we are done.

Otherwise, let $\bar{c}$ be the arc from Sublemma~\ref{sub:minimal_position} homotopic to $c$ and in minimal position with $a$.
The points $(a\cap b)-\pi-\pi'$ determining unicorn arcs obtained from $a^\alpha,b^\beta$ determine the same unicorn arcs obtained from
$a^\alpha,\bar{c}^\beta$. Let $a^*=a-a''$. If $\pi'$ does not determine a unicorn arc obtained from $a^\alpha,b^\beta$, i.e.\ if $a^*$ and $b''$
intersect outside $\pi'$, then we are done as in the previous case. Otherwise,
$a^*\cup b''=c_1$, since it is minimal in the order on the unicorn arcs obtained from $a^\alpha,b^\beta$.
Moreover, since the subarc $\pi\pi'$ of $a$ lies in $a^*$, its interior is disjoint from $b''$, hence also from $b'$.
Thus $a^*\cup b''$ precedes $c$ in the order on the unicorn arcs obtained from $a^\alpha,b^\beta$, which means that $j=2$, as desired.
\end{proof}

\section{Arc graphs are hyperbolic}
\label{sec:criterion}

\begin{defin}
To a pair of vertices $a,b$ of $\mathcal{A}(S)$ we assign the following family $P(a,b)$ of unicorn paths.
Slightly abusing the notation we realise them as arcs $a,b$ on $S$ in minimal position. If $a,b$ are disjoint, then let $P(a,b)$ consist of a single path $(a,b)$. Otherwise, let $\alpha_+,\alpha_-$ be the endpoints of $a$ and let $\beta_+,\beta_-$ be the endpoints of $b$.
Define $P(a,b)$ as the set of four unicorn paths:
$\mathcal{P}(a^{\alpha_+},b^{\beta_+}),\mathcal{P}(a^{\alpha_+},b^{\beta_-}),\mathcal{P}(a^{\alpha_-},b^{\beta_+}),$ and $\mathcal{P}(a^{\alpha_-},b^{\beta_-})$.
\end{defin}

The proof of the next proposition follows along the lines of \cite[Prop 3.5]{Ham} (or \cite[Thm III.H.1.7]{BH}).

\begin{prop}
\label{prop:close}
Let $\mathcal{G}$ be a geodesic in $\mathcal{A}(S)$ between vertices $a,b$. Then any vertex $c\in \mathcal{P}\in P(a,b)$ is at distance $\leq 6$ from $\mathcal{G}$.
\end{prop}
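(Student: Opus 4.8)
The plan is to run the standard ``local-to-global'' argument for hyperbolicity, using the $1$-slim triangle property (Lemma~\ref{lem:triangle}) together with subpath invariance (Lemma~\ref{lem:subpath}) as the two structural inputs. Fix the geodesic $\mathcal{G}$ from $a$ to $b$ and a unicorn path $\mathcal{P}\in P(a,b)$; write $n$ for the length of $\mathcal{G}$, so the vertices of $\mathcal{G}$ are indexed by $\{0,\dots,n\}$. I would prove the statement by induction on $n$. The base cases (say $n\le $ a small constant, where $b$ is within bounded distance of $a$) are immediate because every vertex of $\mathcal{P}$ is within the relevant bound of $a$; here one uses that $\mathcal{P}$ has diameter controlled by $n$, which follows since consecutive unicorn arcs are adjacent (the Remark) so $\mathcal{P}$ is a path of length $\le $ something, but more usefully that its endpoints $a,b$ are at distance $n$.

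For the inductive step, pick the midpoint vertex $m$ of $\mathcal{G}$, and consider the vertex $d=m$ (realized as an arc in minimal position with both $a$ and $b$ after choosing endpoints appropriately). Apply Lemma~\ref{lem:triangle} to the triangle with endpoint-arcs $a^\alpha,b^\beta,d^\delta$: the given $c\in\mathcal{P}(a^\alpha,b^\beta)$ admits a vertex $c^*\in\mathcal{P}(a^\alpha,d^\delta)\cup\mathcal{P}(d^\delta,b^\beta)$ with $c,c^*$ adjacent. Say $c^*\in\mathcal{P}(a^\alpha,d^\delta)$. Now $d=m$ lies on $\mathcal{G}$, so the sub-geodesic $\mathcal{G}'$ of $\mathcal{G}$ from $a$ to $m$ is a geodesic of length $\lfloor n/2\rfloor<n$. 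The subtlety is that $\mathcal{P}(a^\alpha,d^\delta)$ need not literally be one of the four paths in $P(a,d)$ with the ``right'' endpoint choices — but it \emph{is} a unicorn path between two adjacent-or-equal realizations, and in any case it is one of the finitely many unicorn paths between (representatives of) $a$ and $d$; after possibly passing to the path in $P(a,m)$ with matching endpoint data we are in position to apply the inductive hypothesis to $\mathcal{G}'$ and that unicorn path, concluding $c^*$ is within $6$ of $\mathcal{G}'\subset\mathcal{G}$. Then $d(c,\mathcal{G})\le d(c,c^*)+d(c^*,\mathcal{G})\le 1+6=7$ — which is one too many, so the argument as stated needs sharpening.

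The actual Hamenst\"adt-style argument avoids this loss by a more careful bookkeeping: instead of a single bisection one tracks, for a vertex $c\in\mathcal{P}$, how its distance to $\mathcal{G}$ grows as one repeatedly bisects, and shows the distances satisfy a recursion of the form $D_{k+1}\le \tfrac12 D_k + O(1)$ (using $1$-slimness to pass from a path on the triangle to a path on a sub-triangle at the cost of $+1$ each time, and subpath-invariance Lemma~\ref{lem:subpath} to guarantee the sub-triangles really involve unicorn \emph{sub}paths, with the harmless exception $j=i+2$ absorbed into the constant), whose solution is bounded by $2\cdot O(1)$. I would set up this recursion explicitly: let $c\in\mathcal{P}$, let $c'$ be a closest vertex of $\mathcal{G}$, and peel off the sub-geodesic between the two ``feet'' of the approximation; applying Lemma~\ref{lem:triangle} to the relevant sub-triangle produces a new vertex adjacent to $c$ on a strictly shorter unicorn path, and iterating until the sub-geodesic has length $\le 1$ shows, after summing the geometric series $1+\tfrac12+\tfrac14+\cdots<2$ of accumulated $+1$'s against the contributions, that $d(c,\mathcal{G})\le 6$.

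\textbf{Main obstacle.} The delicate point — and where I expect the real work to lie — is making the recursion close with constant exactly $6$ rather than something slightly larger: one must be careful that (i) the $1$-slimness step contributes only a single $+1$ per halving and these sum to something $<$ the budget, (ii) the exceptional case $j=i+2$ of Lemma~\ref{lem:subpath} (where a unicorn subpath degenerates to a length-$2$ detour with adjacent endpoints) does not accumulate across iterations, and (iii) the endpoint choices $(\alpha,\beta)$ can always be propagated down the bisection so that one genuinely stays within the family $P(\cdot,\cdot)$ of four unicorn paths, or, if not, that the off-family unicorn path differs from an in-family one only by adjacency at the ends. Handling these three bookkeeping issues cleanly — rather than the geometry, which is entirely contained in Lemmas~\ref{lem:triangle} and~\ref{lem:subpath} — is the crux.
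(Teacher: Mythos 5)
Your first approach (induction on $|\mathcal{G}|$ with midpoint bisection) does not merely lose a constant: it does not close at all. Each bisection costs a fixed $+1$ from Lemma~\ref{lem:triangle}, so over $\log_2 n$ bisections you accumulate $O(\log n)$, which is the standard proof that unicorn paths are \emph{unparameterized quasi-geodesics}, not that they are uniformly close to geodesics. Your claim that the inductive step yields $7$ presupposes an inductive hypothesis of $6$, which it then fails to reproduce; no constant survives this induction. Your second sketch is too vague to assess, and the recursion $D_{k+1}\leq\tfrac12 D_k+O(1)$ you posit is not what Hamenst\"adt's argument does --- there is no mechanism here making the $+1$ contributions decay geometrically, and the exceptional case of Lemma~\ref{lem:subpath} is a side issue, not the crux.

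The missing idea is a \emph{maximality bootstrap}. Let $k$ be the \emph{maximum} distance from any vertex of any path in $P(a,b)$ to $\mathcal{G}$, and let $c$ attain it. Take the maximal subpath $a'b'\subset\mathcal{P}$ through $c$ whose endpoints $a',b'$ satisfy $|c,a'|,|c,b'|\leq 2k$; by Lemma~\ref{lem:subpath} this subpath is itself a unicorn path between $a'$ and $b'$. Project $a',b'$ to nearest points $a'',b''\in\mathcal{G}$ (so $|a',a''|,|b',b''|\leq k$, and $a''=a$ if $a'=a$, etc.); then $|a'',b''|\leq 6k$, and the concatenation $a'\to a''\to b''\to b'$ (two ``legs'' plus the geodesic segment) has length $m\leq 8k$. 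Lemma~\ref{lem:log} --- which is exactly the iterated $1$-slimness you were reaching for --- now gives $c$ within $\lceil\log_2 8k\rceil$ of some $x_i$ on this concatenation. The punchline is a dichotomy: if $x_i$ lies on a leg, then $|c,x_i|\geq |c,a'|-|a',x_i|\geq 2k-k=k$ by how far out $a'$ was pushed; if $x_i\in\mathcal{G}$, then $|c,x_i|\geq k$ by the very definition of $k$. Either way $\lceil\log_2 8k\rceil\geq k$, which forces $k\leq 6$. So the argument is not an induction on geodesic length and not a contraction mapping; it is a one-shot comparison of a linear lower bound (from maximality and the choice of $a',b'$) against a logarithmic upper bound (from repeated bisection), and it is precisely this interplay that produces the uniform constant.
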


In the proof we need the following lemma which is immediately obtained by applying $k$ times Lemma~\ref{lem:triangle}.

\begin{lemma}
\label{lem:log}
Let $x_0,\ldots, x_m$ with $m\leq 2^k$ be a sequence of vertices in $\mathcal{A}(S)$. Then for any $c\in \mathcal{P}\in P(x_0,x_m)$ there is $0\leq i<m$ with $c^*\in \mathcal{P}^*\in P(x_i,x_{i+1})$ at distance $\leq k$ from $c$.
\end{lemma}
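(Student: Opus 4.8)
The plan is to prove Lemma~\ref{lem:log} by induction on $k$, using Lemma~\ref{lem:triangle} as the inductive step. The base case $k=0$ is immediate: when $m\leq 2^0=1$ we either have $m=0$, so there is nothing to prove, or $m=1$, in which case $(x_0,x_1)$ itself (or a path containing it) lies in $P(x_0,x_1)$ and $c$ is at distance $0$ from a vertex of $\mathcal{P}^*\in P(x_0,x_1)$ — indeed $c$ already lies on a path in $P(x_0,x_1)$.

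For the inductive step, suppose the statement holds for $k-1$ and let $x_0,\ldots,x_m$ with $m\leq 2^k$ be given, together with $c\in\mathcal{P}\in P(x_0,x_m)$. I would split the sequence at the midpoint $x_\ell$ with $\ell=\lceil m/2\rceil$ (or $\lfloor m/2\rfloor$), so that both $x_0,\ldots,x_\ell$ and $x_\ell,\ldots,x_m$ have length at most $2^{k-1}$. Applying Lemma~\ref{lem:triangle} to the triple of endpoints $x_0^{?}, x_m^{?}, x_\ell^{?}$ (choosing whatever endpoints are used to define the path $\mathcal{P}$, and some choice of endpoint for $x_\ell$), we obtain a vertex $c_1\in\mathcal{P}(x_0,x_\ell)\cup\mathcal{P}(x_\ell,x_m)$ adjacent to $c$, hence at distance $\leq 1$ from $c$, and lying on some path in $P(x_0,x_\ell)$ or $P(x_\ell,x_m)$. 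By the inductive hypothesis applied to the corresponding half of the sequence, there is a vertex $c^*\in\mathcal{P}^*\in P(x_i,x_{i+1})$ for some $i$ in that half, at distance $\leq k-1$ from $c_1$. The triangle inequality then gives distance $\leq 1+(k-1)=k$ from $c$ to $c^*$, completing the induction.

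The one point that needs a little care is the bookkeeping of basepoints: Lemma~\ref{lem:triangle} is stated for arcs with chosen endpoints $a^\alpha,b^\beta,d^\delta$, and the path $\mathcal{P}\in P(x_0,x_m)$ is one of the (up to four) unicorn paths $\mathcal{P}(x_0^{\alpha},x_m^{\beta})$; since $P(x_i,x_{i+1})$ by definition contains \emph{all} unicorn paths between its endpoints for \emph{every} choice of endpoints, whichever path $c_1$ happens to lie on is automatically a member of $P(x_0,x_\ell)$ or $P(x_\ell,x_m)$, so no constraint is lost. (If one of the $x_j$ is a curve rather than an arc the degenerate path $(x_j,x_{j+1})$ is used, but $c$ adjacent to one of the endpoints is still handled, since adjacent vertices are at distance $1\le k$.) I do not anticipate a genuine obstacle here; the lemma is essentially a telescoping of the $1$-slim-triangle property along a dyadic subdivision, and the ``immediately obtained by applying $k$ times Lemma~\ref{lem:triangle}'' in the text is accurate. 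The only thing to state carefully is that the midpoint split keeps both halves of size $\le 2^{k-1}$, which holds because $\lceil 2^k/2\rceil = 2^{k-1}$.
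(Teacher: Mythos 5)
Your proof is correct and is exactly the dyadic-subdivision argument the paper intends with its one-line remark that the lemma is ``immediately obtained by applying $k$ times Lemma~\ref{lem:triangle}''. (Your parenthetical about curves is superfluous here since every vertex of $\mathcal{A}(S)$ is an arc, but this does not affect the argument.)
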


\begin{proof}[Proof of Proposition~\ref{prop:close}]
Let $c\in \mathcal{P}\in P(a,b)$ be at maximal distance $k$ from $\mathcal{G}$. Assume $k\geq 1$. Consider the maximal subpath $a'b'\subset \mathcal{P}$ containing $c$ with $a',b'$ at distance $\leq 2k$ from $c$. By Lemma~\ref{lem:subpath} we have $a'b'\in P(a,b)$. Let $a'',b''\in \mathcal{G}$ be closest to $a',b'$. Thus $|a'',a'|\leq k, |b'',b'|\leq k$, and in the case where $a'=a$ or $b'=b$, we have $a''=a$ or $b''=b$ as well. Hence $|a'',b''|\leq 6k$. Consider the concatenation of $a''b''$ with any geodesic paths $a'a'', b''b'$. Denote the consecutive vertices of that concatenation by $x_0,\ldots, x_m$, where $m\leq 8k$. By Lemma~\ref{lem:log}, the vertex $c$ is at distance $\leq \lceil \log_2 8k\rceil$ from some $x_i$. If $x_i\notin \mathcal{G}$, say $x_i\in a'a''$ then $|c,x_i|\geq |c,a'|-|a',x_i|\geq k$, so that $\lceil \log_2 8k\rceil \geq k$. Otherwise if $x_i\in \mathcal{G}$, then we also have $\lceil \log_2 8k\rceil \geq k$, this time by the definition of $k$. This gives $k\leq 6$.
\end{proof}

\begin{proof}[Proof of Theorem~\ref{thm:arc}]
Let $abd$ be a triangle in $\mathcal{A}(S)$ formed by geodesic edge-paths. By Lemma~\ref{lem:minsize1}, there are pairwise adjacent vertices $c_{ab},c_{ad},c_{db}$ on some paths in $P(a,b), P(a,d),P(b,d)$. We now apply Proposition~\ref{prop:close} to $c_{ab},c_{ad},c_{db}$ finding vertices on $ab,ad,bd$ at distance $\leq 6$. Thus $abd$ is $7$--centred at $c_{ab}$.
\end{proof}

\section{Curve graphs are hyperbolic}
\label{sec:curve}

In this section let $|\cdot,\cdot|$ denote the combinatorial distance in $\mathcal{AC}(S)$ instead of in $\mathcal{A}(S)$.

\begin{rem}[{\cite[Lem 2.2]{MM2}}]
\label{rem:quasi}
Suppose that $\mathcal{C}(S)$ is connected and hence $S$ is not the four holed sphere or the once holed torus.
Consider a retraction $r\colon \mathcal{AC}^{(0)}(S)\rightarrow \mathcal{C}^{(0)}(S)$ assigning to each arc a boundary component of a regular neighbourhood of its union with $\partial S$. We claim that $r$ is $2$--Lipschitz. If $S$ is not the twice holed torus, the claim follows from the fact that a pair of disjoint arcs does not fill $S$. Otherwise, assume that $a,b$ are disjoint arcs filling the twice holed torus $S$. Then the endpoints of $a,b$ are all on the same component of $\partial S$ and $r(a), r(b)$ is a pair of curves intersecting once. Hence the complement of $r(a)$ and $r(b)$ is a twice holed disc, so that $r(a),r(b)$ are at distance $2$ in $\mathcal{C}(S)$ and the claim follows.

Moreover, if $b$ is a curve in $\mathcal{AC}^{(0)}(S)$ adjacent to an arc $a$, then $b$ is adjacent to $r(a)$ as well. Thus the
distance in $\mathcal{C}(S)$ between two nonadjacent vertices $c,c'$ does not exceed $2|c,c'|-2$.
Consequently, a geodesic in $\mathcal{C}(S)$ is a $2$--quasigeodesic in $\mathcal{AC}(S)$.
Here we say that an edge-path with vertices $(c_i)_i$ is a \emph{$2$--quasigeodesic}, if $|i-j|\leq 2|c_i,c_j|$.
\end{rem}

\begin{proof}[Proof of Theorem~\ref{thm:curve}]
We first assume that $S$ has nonempty boundary. Let $T=abd$ be a triangle in the curve graph formed by geodesic edge-paths. By Remark~\ref{rem:quasi}, the sides of $T$ are $2$--quasigeodesics in $\mathcal{AC}(S)$. Choose arcs $\bar{a},\bar{b},\bar{d}\in \mathcal{AC}^{(0)}(S)$ that are adjacent to $a,b,d$, respectively.

Let $k$ be the maximal distance from any vertex $\bar{c}\in\mathcal{P}\in P(\bar{a}\bar{b})$ to the side $\mathcal{G}=ab$. Assume $k\geq 1$. As in the proof of Proposition~\ref{prop:close}, consider the maximal subpath $a'b'\subset \mathcal{P}$ containing $\bar{c}$ with $a',b'$ at distance $\leq 2k$ from $\bar{c}$. Let $a'',b''\in \mathcal{G}$ be closest to $a',b'$, so that $|a'',b''|\leq 6k$. Consider the concatenation $(x_i)_{i=0}^m$ of $a''b''$ with any geodesic paths $a'a'', b''b'$ in $\mathcal{AC}(S)$. Since $a''b''$ is a $2$--quasigeodesic, we have $m\leq 2k+ 2|a'',b''|=14k$.
For $i=0,\ldots, m-1$ let $\bar{x}_i\in\mathcal{AC}^{(0)}(S)$ be an arc adjacent (or equal) to both $x_i$ and $x_{i+1}$. Note that then all paths in $P(\bar{x}_i,\bar{x}_{i+1})$ are at distance $1$ from $x_{i+1}$.
By Lemmas~\ref{lem:subpath} and~\ref{lem:log}, the vertex $\bar{c}$
at distance $\leq \lceil \log_2 14k\rceil$ from a path in some $P(\bar{x}_i,\bar{x}_{i+1})$. Hence $\lceil \log_2 14k\rceil +1\geq k$.
This gives $k\leq 8$.

By Lemma~\ref{lem:minsize1}, there are pairwise adjacent vertices on some paths in $P(\bar{a},\bar{b}), P(\bar{a},\bar{d}),$ and in $P(\bar{b},\bar{d})$.
Let $\bar{c}$ be one of these vertices. Then $\bar{c}$ is at distance $\leq 9$ from all the sides of $T$ in $\mathcal{AC}(S)$. Consider the curve $c=r(\bar{c})$ adjacent to $\bar{c}$, where $r$ is the retraction from Remark~\ref{rem:quasi}. Then $T$ considered as a triangle in $\mathcal{C}(S)$ is $17$--centred at $c$, by Remark~\ref{rem:quasi}. Hence $\mathcal{C}(S)$ is $17$--hyperbolic for $\partial S\neq\emptyset$.

The curve graph $\mathcal{C}(S)$ of a closed surface (if connected) is known to be a $1$--Lipschitz retract of the curve graph $\mathcal{C}(S')$, where $S'$ is the once punctured $S$ \cite[Lem 3.6]{Har}, \cite[Thm 1.2]{RS}. The retraction is the puncture forgetting map. A section $\mathcal{C}(S)\rightarrow \mathcal{C}(S')$ can be constructed by choosing a hyperbolic metric on $S$, realising curves as geodesics and then adding a puncture outside the union of the curves. Hence $\mathcal{C}(S)$ is $17$--hyperbolic as well.
\end{proof}


\begin{bibdiv}
\begin{biblist}

\bib{Sh}{article}{
   author={Alonso, J. M.},
   author={Brady, T.},
   author={Cooper, D.},
   author={Ferlini, V.},
   author={Lustig, M.},
   author={Mihalik, M.},
   author={Shapiro, M.},
   author={Short, H.},
   title={Notes on word hyperbolic groups},
   note={Edited by Short},
   conference={
      title={Group theory from a geometrical viewpoint},
      address={Trieste},
      date={1990},
   },
   book={
      publisher={World Sci. Publ., River Edge, NJ},
   },
   date={1991},
   pages={3--63}}

\bib{A}{article}{
   author={Aougab, Tarik}
   title={Uniform Hyperbolicity of the Graphs of Curves},
   date={2012}
   eprint={arXiv:1212.3160}}

\bib{Bow}{article}{
   author={Bowditch, Brian H.},
   title={Intersection numbers and the hyperbolicity of the curve complex},
   journal={J. Reine Angew. Math.},
   volume={598},
   date={2006},
   pages={105--129}}

\bib{Bow2}{article}{
   author={Bowditch, Brian H.},
   title={Uniform hyperbolicity of the curve graphs},
   eprint={http://homepages.warwick.ac.uk/~masgak/papers/uniformhyp.pdf}
   date={2012}}

\bib{BH}{book}{
   author={Bridson, Martin R.},
   author={Haefliger, Andr{\'e}},
   title={Metric spaces of non-positive curvature},
   series={Grundlehren der Mathematischen Wissenschaften [Fundamental
   Principles of Mathematical Sciences]},
   volume={319},
   publisher={Springer-Verlag},
   place={Berlin},
   date={1999},
   pages={xxii+643}}

\bib{CRS}{article}{
   author={Clay, M.T.},
   author={Rafi, K.},
   author={Schleimer,S.},
   title={Uniform hyperbolicity of the curve graph via surgery sequences},
   date={2013}
   status={in preparation}}

\bib{Ham}{article}{
   author={Hamenst{\"a}dt, Ursula},
   title={Geometry of the complex of curves and of Teichm\"uller space},
   conference={
      title={Handbook of Teichm\"uller theory. Vol. I},
   },
   book={
      series={IRMA Lect. Math. Theor. Phys.},
      volume={11},
      publisher={Eur. Math. Soc., Z\"urich},
   },
   date={2007},
   pages={447--467}}

\bib{Har}{article}{
   author={Harer, John L.},
   title={The virtual cohomological dimension of the mapping class group of
   an orientable surface},
   journal={Invent. Math.},
   volume={84},
   date={1986},
   number={1},
   pages={157--176}}

\bib{Ha0}{article}{
   author={Hatcher, Allen},
   title={On triangulations of surfaces},
   journal={Topology Appl.},
   volume={40},
   date={1991},
   number={2},
   pages={189--194}}

\bib{HOP}{article}{
  author={Hensel, Sebastian},
  author={Osajda, Damian},
  author={Przytycki, Piotr},
  eprint={arXiv:1205.0513},
  date={2012},
  title={Realisation and dismantlability}}

\bib{HH}{article}{
  title={The hyperbolicity of the sphere complex via surgery paths},
  author={Hilion, Arnaud},
  author={Horbez, Camille},
  eprint={arXiv:1210.6183},
  date={2012}}

\bib{MM}{article}{
   author={Masur, Howard A.},
   author={Minsky, Yair N.},
   title={Geometry of the complex of curves. I. Hyperbolicity},
   journal={Invent. Math.},
   volume={138},
   date={1999},
   number={1},
   pages={103--149}}

\bib{MM2}{article}{
   author={Masur, H. A.},
   author={Minsky, Y. N.},
   title={Geometry of the complex of curves. II. Hierarchical structure},
   journal={Geom. Funct. Anal.},
   volume={10},
   date={2000},
   number={4},
   pages={902--974}}

\bib{MS}{article}{
   author={Masur, Howard},
   author={Schleimer, Saul},
   title={The geometry of the disk complex},
   journal={J. Amer. Math. Soc.},
   volume={26},
   date={2013},
   number={1},
   pages={1--62}}

\bib{RS}{article}{
   author={Rafi, Kasra},
   author={Schleimer, Saul},
   title={Curve complexes are rigid},
   journal={Duke Math. J.},
   volume={158},
   date={2011},
   number={2},
   pages={225--246}}

\end{biblist}
\end{bibdiv}

\end{document}